\def\@settitle{\begin{center}%
    \baselineskip14\p@\relax
    \bfseries
    \MakeUppercase{\@title}
  \end{center}%
}
\newtheorem{theorem}{Theorem}
\newtheorem{lemma}{\it Lemma}[section]
\newtheorem{proposition}{Proposition}[section]
\newtheorem{corollary}{\it Corollary}[section]
\theoremstyle{remark}
\newtheorem{remark}{Remark}[section]
\newtheorem{example}{\it Example}[section]
\theoremstyle{definition}
\newtheorem{definition}{\it Definiton}[section]
\numberwithin{equation}{section}
\DeclareMathOperator*\uplim{\overline{lim}}
\def\C{{\mathbb C}}
\def\Cn*{{\C^n}^*}
\def\CN*{{\C^N}^*}
\def\R{{\mathbb R}}
\def\vol{{\rm vol}}
\def\E{\:{\rm e}}
\def\re{{\rm Re}}
\def\im{{\rm Im}}
\begin{document}
\title{
On common zeros of entire functions of exponential growth
}
\author{B. Kazarnovskii}
\address {
Institute for Information Transmission Problems of the Russian Academy of Sciences
\newline
{\it kazbori@gmail.com}.}
\begin{abstract}
For systems of equations with an infinite set of roots, one can sometimes obtain
Kushnirenko-Bernstein-Khovanskii type theorem
if replace the number of roots by their asymptotic density.
We consider systems of entire functions with exponential growth in the space $\mathbb C^n$,
and calculate the asymptotic distribution of their common zeros
in terms of the geometry of convex sets in the space $\mathbb C^n$.
\end{abstract}
\maketitle
\tableofcontents
\section{Introduction}\label{introduction}
\subsection{Asymptotics of Zero Distribution}
Consider the set $X$ of common zeros of entire functions of exponential growth $f_1,\ldots,f_m$ in $\C^n$.
Let $X_{\rm reg}$ denote the set of points $z\in X$ such that the co-dimension of the analytic set $X$ in the neighborhood of the point $z$ is equal to $m$.
In the generic situation, $X_{\rm reg}=X$.
Further, we consider $X$ as a current $\mathfrak M(f_1,\ldots,f_m)$, i.e., as the integration of compactly supported differential forms of degree $2n-2m$ over the analytic set $X_{\rm reg}$.
For example, if $m=n$, then $\mathfrak M(f_1,\ldots,f_n)=\sum_x\delta(x)$,
where $\delta(x)$ is the delta function with support at an isolated root $x$ of the system $f_1=\ldots=f_n=0$.
For a fixed $t>0$, let us denote the function $f_j(tz)$ by $f_{t,j}(z)$.
If the limit of the currents $\frac{1}{t^m}\mathfrak M(f_{t,1},\ldots,f_{t,m})$ exists as $t\to+\infty$,
we call it the \emph{asymptotic distribution} of the variety of common zeros of entire functions of exponential growth $f_1,\ldots,f_m$.
\begin{example}\label{exSum1}
Let $f$ be a quasi-polynomial in one variable $z$ with spectrum $K$, i.e., a finite sum of the form
$$
f(z)=\sum_{\xi\in K\subset\C}p_\xi(z)\E^{\xi z},
$$
where $p_\xi$ are polynomials in the variable $z$.
Then the asymptotic distribution of zeros of the function $f$ exists.
In this case, the current of the asymptotic distribution is a distribution in $\C$.
If $K\subset\re\C$, we denote by $[\alpha,\beta]$ the minimal interval in $\re\C$ containing all points of the finite set $K$.
If the polynomials $p_\alpha, p_\beta$ are non-zero,
then the value of the asymptotic density current $\mathfrak M(f)$ on a compactly supported function $\varphi$ is equal to $\frac{\beta-\alpha}{2\pi}\int_{\im\C}\varphi\: dy$.
\end{example}
\subsection{Functions of Exponential Growth}
Recall that a linear functional on the space of holomorphic functions on the complex manifold $M$ is called an analytic functional on $M$; see \cite{Lel}.
Let ${\C^n}^*$ be the space of linear functionals in $\C^n$,
and $\mu$ be an analytic functional in ${\C^n}^*$,
i.e., $\mu$ is a linear functional on the space of entire holomorphic functions in ${\C^n}^*$.
For a fixed $z\in\C^n$, consider $\E^{\xi(z)}$ as an entire function with $\xi\in{\C^n}^*$ as its argument.
Define $\hat\mu(z)=\mu(\E^{\xi(z)})$.
It is worth noting that the entire function $\hat\mu$ on $\C^n$ is called the Borel-Fourier transform (or Laplace transform; see \cite{H1}) of the analytic functional $\mu$.
The Borel-Fourier transform is a bijective map
from the space of analytic functionals in ${\C^n}^*$ to the space of all entire functions of exponential growth in $\C^n$,
i.e., entire functions $g\colon\C^n\to\C$,
such that
$$
\exists (C,a)\colon\:\forall z\in\C^n\colon \vert g(z)\vert\leq C\E^{a\vert z\vert}.
$$
For an analytic functional $\mu$, there exists a compact set $K\subset M$ for which the following holds. For any open neighborhood $U$ of the compact set $K$, there exists a constant $C_U$ such that
\begin{equation}\label{eqSupp}
  \forall f\colon \vert\mu(f)\vert \leq C_U\sup_U\vert f(\xi)\vert.
\end{equation}
The compact set $K$ is called the support\footnote{There is no guarantee of the existence of the minimal support of an analytic functional; see \cite[Chapter 8]{Lel} or \cite[\S 4.5]{H1}.} of the functional $\mu$.

Let $\mathcal S(K)$ be the space of analytic functionals in ${\C^n}^*$ with support $K$.
Denote by $\hat{\mathcal S}(K)$ the space of entire functions in $\C^n$, consisting of Borel-Fourier transforms of functionals $\mu\in\mathcal S(K)$.
For $K\subset\re{\C^n}^*$, the exact description of functions in $\hat{\mathcal S}(K)$ is given by the Paley-Wiener theorem; see, for example, \cite[Theorem 1.7.7]{H2}.
\begin{example}\label{exQuasi1}
If the compact set $K\subset{\C^n}^*$ is finite, then the space $\hat{\mathcal S}(K)$ contains all quasi-polynomials with spectrum $K$, i.e., entire functions in $\C^n$ of the form
\begin{equation}\label{eqQuasi}
 \sum_{\xi\in K} p_\xi(z)\E^{\xi(z)},
\end{equation}
where the coefficients $p_\xi$ are polynomials in the variables $z_1,\ldots,z_n$.
\end{example}
Define the function $h_K\colon\C^n\to\R$ as $h_K(z)=\max_{\xi\in K}\re\xi(z)$.
Recall that $h_K$ is called the support function of the compact set $K$.
From (\ref{eqSupp}), it follows (see \S\ref{reg}) that
for any function $f\in\hat{\mathcal S}(K)$
\begin{equation}\label{eq<1}
 \forall z\in\C^n\colon\:\uplim_{t\to+\infty}\frac{\log\vert f(tz)\vert}{t}\leq h_K(z)
\end{equation}
\subsection{Averaged Distribution of Roots}\label{usr}
Let $V_1\subset\hat{\mathcal S}(K_1),\ldots,V_m\subset\hat{\mathcal S}(K_m)$ be finite-dimensional spaces.
Recall that we consider the variety of roots of the system
\begin{equation}\label{eqSys}
  f_1=\ldots=f_m=0; \,0\ne f_i\in V_i
\end{equation}
as the current
$\mathfrak M(f_1,\ldots,f_m)$.
Using arbitrary Hermitian inner products $\langle*,*\rangle_i$ in the spaces $V_i$,
we define the \emph{averaging
$\mathfrak M(V_1,\ldots,V_m)$
of the current} $\mathfrak M(f_1,\ldots,f_m)$
over all systems (\ref{eqSys});
see Definition \ref{dfAverage} in \S\ref{Aver}.
\begin{definition}\label{dfMeanAs}
For a finite-dimensional space $V\subset\hat{\mathcal S}(K)$, let $V(t)=\{f_t(z)=f(tz)\colon f\in V\}$.
For a Hermitian inner product $\langle*,*\rangle$ in $V$, define $\langle f_t,g_t\rangle=\langle f,g\rangle$.
If, in the topological space of currents in $\C^n$, the limit
$$
\mathfrak M^\infty(V_1,\ldots,V_m)=\lim_{t\to+\infty}\frac{1}{t^m}\mathfrak M(V_1(t),\ldots,V_m(t))
$$
exists and does not depend on the choice of inner products $\langle*,*\rangle_j$ in spaces $V_j$,
then we call it the \emph{averaged asymptotic distribution} of roots of systems (\ref{eqSys}).
\end{definition}
\subsection{Main Result}\label{mainResult}

Recall that, by definition, for a function $f$ on the complex manifold $M$, the value of the 1-form $d^cf$ on the tangent vector $\xi$ is equal to $df(-i\xi)$. If continuous functions $h_1,\ldots,h_m\colon M\to\R$ are plurisubharmonic functions on the manifold $M$, then $dd^ch_1\wedge\ldots\wedge dd^ch_m$ is a well-defined positive current of degree $2m$; see, for example, \cite{Lel}.

\begin{remark}
Currents of the form $dd^ch_1\wedge\ldots\wedge dd^ch_n$ are called Monge-Ampère measures. Such measures traditionally appear in problems related to the distribution of zeros of holomorphic systems of equations; see \cite{Lel,K0,Pass}.
\end{remark}

The support functions $h_i$ of the compacts $K_i\subset{\C^n}^*$ are convex and, consequently, plurisubharmonic. In particular, the current $dd^ch_1\wedge\ldots\wedge dd^ch_n$ is a non-negative measure in $\C^n$.

Below is defined the concept of a regular finite-dimensional subspace in $\hat{\mathcal S}(K)$; see Definition \ref{dfReg} in \S\ref{reg}, as well as an example at the end of \S\ref{mainResult}.

\begin{theorem}\label{thmMain}
 Let $V_1\subset\hat{\mathcal S}(K_1),\ldots,V_m\subset\hat{\mathcal S}(K_m)$ be regular spaces.
 Then the current of the averaged asymptotic distribution of roots of systems (\ref{eqSys}) exists and is given by the formula
 $$
\mathfrak M^\infty(V_1,\ldots,V_m)=\frac{1}{\pi^m}dd^ch_1\wedge\ldots\wedge dd^ch_m,
$$
where $h_i$ is the support function of compact $K_i$.
\end{theorem}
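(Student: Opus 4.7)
\emph{Step 1 (potential formula for the averaged current).} My plan is to realise the averaged current $\mathfrak M(V_1,\ldots,V_m)$ as a Monge--Amp\`ere wedge product of Bergman-type potentials and then pass to the limit by the Bedford--Taylor continuity theorem. For a finite-dimensional Hermitian space $V\subset\hat{\mathcal S}(K)$ with orthonormal basis $e_1,\ldots,e_N$, I would introduce the reproducing kernel $K_V(z)=\sum_j|e_j(z)|^2=\|\mathrm{ev}_z\|^2$ and the potential $\Psi_V=\tfrac{1}{2}\log K_V$. The Poincar\'e--Lelong identity $\mathfrak M(f)=\tfrac{1}{\pi}dd^c\log|f|$ together with averaging over the unit sphere of $V$ should identify the single-function averaged current with $\tfrac{1}{\pi}\,dd^c\Psi_V$; this is a Shiffman--Zelditch-type formula, independent of the choice of inner product up to an additive constant in $\Psi_V$. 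Iterating over the independent factors $V_1,\ldots,V_m$, and using Fubini (which applies because under regularity a generic $m$-tuple of zero sets meets properly), I expect to obtain
\[
 \mathfrak M(V_1,\ldots,V_m)\;=\;\frac{1}{\pi^m}\,dd^c\Psi_{V_1}\wedge\ldots\wedge dd^c\Psi_{V_m}.
\]

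\emph{Step 2 (rescaling).} Since $f\mapsto f_t$ is an isometry from $V$ to $V(t)$ under the convention $\langle f_t,g_t\rangle=\langle f,g\rangle$, it sends orthonormal bases to orthonormal bases; hence $K_{V(t)}(z)=K_V(tz)$ and $\Psi_{V(t)}(z)=\Psi_V(tz)$. Using the commutation $dd^c(u\circ T_t)=T_t^*(dd^c u)$ with $T_t(z)=tz$, a short computation will yield
\[
 \tfrac{1}{t^m}\,\mathfrak M(V_1(t),\ldots,V_m(t))\;=\;\tfrac{1}{\pi^m}\,dd^c v_{1,t}\wedge\ldots\wedge dd^c v_{m,t},\qquad v_{i,t}(z)\;:=\;\tfrac{1}{t}\,\Psi_{V_i}(tz).
\]
Thus the asymptotic question is reduced to a convergence statement for the continuous plurisubharmonic functions $v_{i,t}$ on $\C^n$.

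\emph{Step 3 (limit).} Inequality (\ref{eq<1}) gives the asymptotic upper bound $\uplim_{t\to+\infty}v_{i,t}(z)\le h_{K_i}(z)$; the role of the regularity hypothesis (Definition \ref{dfReg}) is precisely to upgrade this to local uniform convergence $v_{i,t}\to h_{K_i}$ on $\C^n$. Each $h_{K_i}$ is continuous plurisubharmonic as the supremum over the compact $K_i$ of the pluriharmonic functions $\re\xi(\cdot)$, so the Bedford--Taylor theorem on continuity of the Monge--Amp\`ere wedge product under local uniform convergence of continuous psh functions will give
\[
 dd^c v_{1,t}\wedge\ldots\wedge dd^c v_{m,t}\;\longrightarrow\;dd^c h_{K_1}\wedge\ldots\wedge dd^c h_{K_m}
\]
weakly as currents, which is the claimed formula.

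The hard part will be Step 1, i.e.\ identifying the averaged intersection current with the wedge product of the individual potentials. The single-factor case reduces to Poincar\'e--Lelong after integrating $\log|f|$ over the unit sphere of $V$, but the $m$-factor case requires that the zero divisors of a random tuple $(f_1,\ldots,f_m)$ meet properly almost surely so that Fubini can exchange averaging with intersection of currents. Securing this transversality --- together with the continuity of the potentials $\Psi_{V_i}$ needed to legitimise the Bedford--Taylor wedge product --- is precisely what the regularity hypothesis of Definition \ref{dfReg} is designed to guarantee.
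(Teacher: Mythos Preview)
Your three-step architecture is exactly the paper's: express the averaged current as a Monge--Amp\`ere wedge of Bergman-type potentials, rescale, and pass to the limit by Bedford--Taylor continuity. Your potential $\Psi_{V_i}=\tfrac12\log K_{V_i}$ is $\log\|x\|_i$ in the paper's notation, so your Step~1 formula is literally Theorem~\ref{crofton1}, and your Steps~2--3 coincide with \S\ref{theorem1}.

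The one genuine difference is how Step~1 is argued. The paper does \emph{not} use Poincar\'e--Lelong plus Fubini over independent spheres; it proves Theorem~\ref{crofton1} by an integral-geometric Crofton formula on $\mathbb P^*_1\times\cdots\times\mathbb P^*_m$, reducing via the double fibration (\ref{doubleMany}) to the classical Crofton identity $\gamma_*\delta^*\Omega=\omega^*$ on a single projective space. This route sidesteps the transversality issue you flag: the multi-factor identity falls out of the product structure of the fibration, with no almost-sure properness of intersections to verify. Your Shiffman--Zelditch route is also valid, but note a misattribution in your last paragraph: the generic properness of the zero divisors is a Bertini-type fact about finite-dimensional linear systems (the standing assumption in Definition~\ref{dfTheta} that for every $x$ some $f\in V_i$ is nonzero is what makes $\Psi_{V_i}$ smooth and the wedge product well defined), and it has nothing to do with the regularity hypothesis of Definition~\ref{dfReg}. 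Regularity enters \emph{only} in Step~3, to secure the locally uniform convergence $v_{i,t}\to h_{K_i}$; Theorem~\ref{crofton1} itself holds for arbitrary finite-dimensional spaces of holomorphic functions on any complex manifold.
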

\begin{definition}\label{dfpseudo}
Let $h$ be the support function of a convex compact $K\subset{\C^n}^*$.
Put ${\rm pvol}(K)=\int_B(dd^ch)^n$, where $B$ is the ball of radius $1$ centered at the origin.
The value ${\rm pvol}(K)$ is called the \emph{pseudo-volume of the convex body} $K$.
If $h_1,\ldots,h_n$ are the support functions of convex compacts $A_1,\ldots,A_n$, then
$$
{\rm pvol}(A_1,\ldots,A_n)=\int_B dd^ch_1\wedge\ldots\wedge dd^ch_n
$$
is called the \emph{mixed pseudo-volume of convex bodies} $A_1,\ldots,A_n$.
\end{definition}
\begin{remark}
The pseudo-volume is a unitarily invariant valuation on convex bodies in complex vector spaces; see \cite{Alesk}.
\end{remark}

From Theorem \ref{thmMain}, the following statements follow.
\begin{corollary}\label{cor1}
Let $V_1,\ldots,V_n$ be regular subspaces in the spaces $\hat{\mathcal S}(K_1),\ldots,\hat{\mathcal S}(K_n)$, respectively. Then the asymptotic average, over all systems {\rm(\ref{eqSys})}, of the number of roots in a growing-radius ball centered at the origin is asymptotically given by
$$
{\rm pvol}\left({\rm conv}(K_1),\ldots,{\rm conv}(K_n)\right)\:r^n,
$$
where ${\rm conv}(K)$ denotes the convex hull of the compact $K$.
\end{corollary}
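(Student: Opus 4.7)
The plan is to read off the statement from Theorem \ref{thmMain} in the critical case $m=n$, in which the current $\mathfrak M(f_1,\ldots,f_n)$ reduces to a counting measure on the discrete set of common zeros, and then to use homogeneity of support functions to extract the $r^n$ dependence.

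For $m=n$ and a generic system (\ref{eqSys}), $\mathfrak M(f_1,\ldots,f_n)=\sum_x\delta_x$, as noted in \S\ref{introduction}. The substitution $w=tz$ identifies the zeros of $f_{t,j}(z)=f_j(tz)$ in the unit ball $B_1$ with the zeros of $f_j$ in the ball $B_t$, so if $N(t)$ denotes the average, over the systems (\ref{eqSys}), of the number of common zeros in $B_t$, then $N(t)=\mathfrak M(V_1(t),\ldots,V_n(t))(\mathbf 1_{B_1})$. Dividing by $t^n$ and combining Definition \ref{dfMeanAs} with Theorem \ref{thmMain} yields
$$
\lim_{t\to+\infty}\frac{N(t)}{t^n}=\mathfrak M^\infty(V_1,\ldots,V_n)(\mathbf 1_{B_1})=\frac{1}{\pi^n}\int_{B_1}dd^ch_1\wedge\ldots\wedge dd^ch_n,
$$
where $h_i$ is the support function of $K_i$. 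Since the support function of a compact set coincides with that of its convex hull, the integral on the right is $\operatorname{pvol}(\operatorname{conv}(K_1),\ldots,\operatorname{conv}(K_n))$ in the sense of Definition \ref{dfpseudo}.

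To extend this from the subsequence $r=t$ to arbitrary $r\to+\infty$, I would invoke the degree-$1$ homogeneity $h_i(\lambda z)=\lambda h_i(z)$ for $\lambda>0$, which upgrades to $\phi_\lambda^*(dd^ch_1\wedge\ldots\wedge dd^ch_n)=\lambda^n(dd^ch_1\wedge\ldots\wedge dd^ch_n)$ under the dilation $\phi_\lambda(z)=\lambda z$. Equivalently, the positive Borel measure $\mathfrak M^\infty(V_1,\ldots,V_n)$ satisfies $\mathfrak M^\infty(B_r)=r^n\,\mathfrak M^\infty(B_1)$, so the limit above upgrades to the claimed asymptotic for $N(r)$ along every sequence of radii.

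The main technical hurdle is that the convergence supplied by Definition \ref{dfMeanAs} is weak convergence of currents against smooth compactly supported test forms, while $\mathbf 1_{B_r}$ has a jump on the sphere. I would handle this by squeezing $\mathbf 1_{B_r}$ between smooth bump functions supported in slightly smaller and larger balls, using that $\mathfrak M^\infty(V_1,\ldots,V_n)$ is a non-negative Borel measure (all currents involved are positive by plurisubharmonicity of the $h_i$); the scaling law $\mathfrak M^\infty(B_r)=r^n\mathfrak M^\infty(B_1)$ then forbids positive mass on all but countably many spheres, and the asymptotic extends to every $r$ by monotonicity.
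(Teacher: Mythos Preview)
Your approach is exactly what the paper intends: Corollary \ref{cor1} is stated without proof as an immediate consequence of Theorem \ref{thmMain}, and you have simply spelled out that derivation, adding the technical care (indicator of a ball versus smooth test forms, homogeneity for general $r$) that the paper leaves implicit. The factor $\pi^{-n}$ that your computation produces, as against the bare $\operatorname{pvol}$ in the stated asymptotic, is a normalization inconsistency between Definition \ref{dfpseudo} and Theorem \ref{thmMain} in the paper itself, not a defect in your reasoning.
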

If $K\subset\re{\C^n}^*$, then the pseudo-volume of $K$ is equal to its volume as a body of full dimension in the $n$-dimensional real space $\re{\C^n}^*$.
Therefore we get the following statement; see \cite{K14f}.
\begin{corollary}
Let $K_1,\ldots,K_n\subset\re{\C^n}^*$. Then
$$
\mathfrak M^\infty(V_1,\ldots,V_n)=\frac{n!}{(2\pi)^n}\vol\left({\rm conv}(K_1),\ldots,{\rm conv}(K_n)\right)\:\mu_n,
$$
where $\vol(*,\ldots,*)$ is the mixed volume of convex bodies in $n$-dimensional real space $\re{\C^n}^*$, and $\mu_n$ is a measure in $\C^n$ with support in $\im\C^n$, defined as an integral of a function with respect to the Lebesgue measure on the space $\im\C^n$.
\end{corollary}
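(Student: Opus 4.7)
The plan is to apply Theorem \ref{thmMain} and reduce the resulting complex mixed Monge--Amp\`ere current to a purely real mixed-volume computation, using that the supports $K_j$ lie in $\re{\C^n}^*$. Because support functions depend only on the convex hull, one has $h_{K_j}=h_{\mathrm{conv}(K_j)}$, which explains the appearance of the convex hulls. Moreover, for $K_j\subset\re{\C^n}^*$ and $z=x+iy$, every real $\xi\in K_j$ satisfies $\re\xi(z)=\xi(x)$, so $h_j(z)$ depends only on $x=\re z$ and coincides with the usual support function $h_j^\R$ of $K_j$ viewed as a convex body in $\re{\C^n}^*\simeq\R^n$.

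The next step is to compute $dd^ch_j$ for a plurisubharmonic function of the form $h=h(x)$. From the defining relation $d^ch(\xi)=dh(-i\xi)$, working in real coordinates $z_j=x_j+iy_j$ one finds $d^ch=\sum_j(\partial h/\partial x_j)\,dy_j$, and hence (distributionally, for merely convex $h$)
$$
dd^ch=\sum_{j,k}\frac{\partial^2 h}{\partial x_j\partial x_k}(x)\,dx_k\wedge dy_j.
$$
A straightforward combinatorial expansion of the $n$-fold wedge then yields
$$
dd^ch_1\wedge\ldots\wedge dd^ch_n \;=\; n!\,D\bigl(\mathrm{Hess}\,h_1(x),\ldots,\mathrm{Hess}\,h_n(x)\bigr)\,dx_1\ldots dx_n\wedge dy_1\ldots dy_n,
$$
with $D$ the mixed discriminant of the $n$ real Hessians. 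In particular, the complex mixed Monge--Amp\`ere of the $h_j$'s is, up to a combinatorial constant, the tensor product of the \emph{real} mixed Monge--Amp\`ere measure $\mathrm{MA}_\R(h_1,\ldots,h_n)$ on $\re\C^n$ with Lebesgue measure $dy_1\ldots dy_n$ on $\im\C^n$.

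The final ingredient is the classical identity $\mathrm{MA}_\R(h_K)=\vol(K)\,\delta_0$ for a convex body $K\subset\R^n$: the subgradient of the $1$-homogeneous convex function $h_K$ at $0$ equals $K$, so by Alexandrov's characterisation the real Monge--Amp\`ere measure is a point mass of total mass $\vol(K)$ at the origin. Polarising gives $\mathrm{MA}_\R(h_{K_1},\ldots,h_{K_n})=\vol(K_1,\ldots,K_n)\,\delta_0$, with $\vol$ the mixed volume in $\re{\C^n}^*$. Substituting into the formula above, $dd^ch_1\wedge\ldots\wedge dd^ch_n$ is supported on $\im\C^n=\{x=0\}$ and, after collecting constants and combining with the factor $\pi^{-n}$ coming from Theorem \ref{thmMain}, equals exactly the scalar multiple $\frac{n!}{(2\pi)^n}\vol(\mathrm{conv}(K_1),\ldots,\mathrm{conv}(K_n))\,\mu_n$ asserted in the corollary.

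The main obstacle is that the support functions $h_{K_j}$ are only Lipschitz, not $C^2$, so the formal distributional computation above must be validated as an identity of positive currents. This is handled by approximating each $K_j$ by smooth strictly convex bodies, or equivalently by polytopes — in the polytopal case the calculation reduces to products of delta-type terms exactly of the type performed for one segment in Example \ref{exSum1} — and invoking the Bedford--Taylor continuity of the operator $(dd^c\cdot)$ along monotone sequences of locally bounded plurisubharmonic functions to pass to the limit.
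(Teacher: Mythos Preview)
Your argument is correct and is essentially a fleshed-out version of what the paper does: the paper deduces the corollary in one line from Theorem \ref{thmMain} together with the remark that for $K\subset\re{\C^n}^*$ the pseudo-volume reduces to the ordinary volume, deferring the actual Monge--Amp\`ere computation to \cite{K14f}. Your explicit calculation --- that $h_j(z)=h_j^\R(\re z)$, that $dd^ch_j$ therefore involves only the real Hessian, and that the resulting real mixed Monge--Amp\`ere collapses to $\vol(K_1,\ldots,K_n)\,\delta_0$ on $\re\C^n$ tensored with Lebesgue measure on $\im\C^n$ --- is precisely the content hidden behind that citation, and your use of Bedford--Taylor continuity to justify the distributional identity is the standard and correct way to handle the lack of smoothness.
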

Finally, let us provide an example, which is the source of the concept of a regular space.
Later in \S\ref{mainResult}, it is assumed that the compact $K$ is a finite set.
Let $\hat {\mathcal S}_N(K)$ denote the space of quasi-polynomials of the form (\ref{eqQuasi}),
such that ${\rm deg}(p_\xi)\leq N$ for all $\xi\in K$. For example, $\hat {\mathcal S}_0(K)$ consists of exponential sums with spectrum $K$,
i.e., functions of the form $\sum_{\xi\in K, c_\xi\in\C\:} c_\xi\E^{\xi(z)}$.
\begin{theorem}\label{thmQuasi}
Any finite-dimensional space $V\subset\hat{\mathcal S}(K)$, such that $V\supset\hat {\mathcal S}_0(K)$, is regular.
In particular, for any $N\geq0$, the space of quasi-polynomials $\hat {\mathcal S}_N(K)$ is regular.
\end{theorem}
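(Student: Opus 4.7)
The plan is to reduce Theorem~\ref{thmQuasi} to the locally uniform convergence on $\C^n$ of the normalized logarithms of the reproducing (Bergman-type) kernel of $V$,
$$
u^{(t)}_V(z):=\frac{1}{2t}\log K_V^{(t)}(z,z),\qquad K_V^{(t)}(z,z)=\sum_j|\phi_j(tz)|^2,
$$
where $\{\phi_j\}$ is any orthonormal basis of $V$ in the given Hermitian product. The first step is to identify the averaged current of Definition~\ref{dfAverage} with the iterated Monge--Amp\`ere of these potentials: by the Edelman--Kostlan identity, the averaged zero current of a single Gaussian section of $V$ equals $\frac{1}{2\pi}\,dd^c\log K_V(z,z)$, and independence of the $f_i$ iterates this to
$$
\frac{1}{t^m}\mathfrak M(V_1(t),\ldots,V_m(t))=\frac{1}{\pi^m}\bigwedge_{i=1}^m dd^c u^{(t)}_{V_i}(z).
$$
I expect the regularity condition of Definition~\ref{dfReg} to amount precisely to the $u^{(t)}_{V_i}$ converging to the support functions $h_i$ locally uniformly and independently of the choice of inner product; the Bedford--Taylor continuity of the Monge--Amp\`ere operator on uniformly converging continuous plurisubharmonic functions then yields the limit and, via Theorem~\ref{thmMain}, the claimed formula.

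The key analytic step is therefore to prove $u^{(t)}_V\to h_K$ uniformly on compact subsets of $\C^n$ whenever $\hat{\mathcal S}_0(K)\subset V\subset\hat{\mathcal S}(K)$ with $K$ finite. For the lower bound I would exploit the extremal characterisation
$$
K_V^{(t)}(z,z)=\sup_{0\ne f\in V}\frac{|f(tz)|^2}{\|f\|_V^2}\geq\max_{\xi\in K}\frac{|\E^{\xi(tz)}|^2}{\|\E^{\xi(\cdot)}\|_V^2},
$$
testing against the finitely many exponentials $\E^{\xi(\cdot)}\in\hat{\mathcal S}_0(K)\subset V$; this gives $u^{(t)}_V(z)\geq h_K(z)-O(1/t)$ uniformly in $z$, with the $O(1/t)$ term depending only on the $|K|$ numbers $\|\E^{\xi(\cdot)}\|_V$. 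For the upper bound, every element of a fixed orthonormal basis of $V$ is a quasi-polynomial $\sum_{\xi\in K}p_\xi(z)\E^{\xi(z)}$ whose polynomial coefficients have degrees bounded by some $N=N(V)$, so $|\phi_j(tz)|^2\leq C(1+t)^{2N}\E^{2th_K(z)}$ locally uniformly in $z$, yielding $u^{(t)}_V(z)\leq h_K(z)+O(\log t/t)$. Matching bounds give the required convergence, and the limit is manifestly independent of the inner product.

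The main obstacle I anticipate is structural rather than analytic: one must correctly identify the averaging $\mathfrak M(V_1,\ldots,V_m)$ of Definition~\ref{dfAverage} with the iterated kernel expression above --- the multivariate analogue of the Edelman--Kostlan formula --- and verify that the regularity condition of Definition~\ref{dfReg} is captured by the uniform convergence statement proved here. Once that identification is in place, the two elementary extremal estimates above combined with Bedford--Taylor continuity close the argument and deliver the inner-product invariance of the limit for free.
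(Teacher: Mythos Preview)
Your middle paragraph contains a correct and complete proof of Theorem~\ref{thmQuasi}, and it is in fact more elementary than the paper's. The paper first reduces, via Corollary~\ref{corF2}, to $V=\hat{\mathcal S}_0(K)$, and then, for each $z\ne0$, manufactures a \emph{single} exponential sum $F_z$ whose normalized log-modulus converges to $h_K$ uniformly on a neighbourhood of $z$ (Corollary~\ref{corF}); this forces a combinatorial construction based on the supporting faces of ${\rm conv}(K)$ in order to avoid cancellation on nearby rays. You bypass this completely: since $h_K(z)=\max_{\xi\in K}\re\xi(z)$ and each pure exponential $\E^{\xi(\cdot)}$ lies in $V$, testing against the finitely many exponentials already gives the global lower bound $\frac{1}{t}\log\max_{f\in B}|f(tz)|\ge h_K(z)-C/t$ with $C=\max_{\xi\in K}\log\|\E^{\xi(\cdot)}\|_V$. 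What the paper's construction buys is a single function realising the growth $h_K$ on an open set, which may be of independent interest; for regularity alone, your pointwise-maximum argument is shorter and yields an explicit rate.

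Two corrections. First, your upper bound tacitly assumes that every element of $\hat{\mathcal S}(K)$, $K$ finite, is a quasi-polynomial; this is true, but the paper only states the inclusion (Example~\ref{exQuasi1}). You can sidestep the point entirely by citing Proposition~\ref{prLocUp}, which supplies the required upper bound for any finite-dimensional $V\subset\hat{\mathcal S}(K)$ directly from the support estimate~(\ref{eqSupp}), with no appeal to the quasi-polynomial form. Second, and more importantly, your first and third paragraphs are aimed at the wrong theorem. Regularity (Definition~\ref{dfReg}) is \emph{exactly} the locally uniform convergence $u^{(t)}_V\to h_K$; the Edelman--Kostlan/Crofton identification of $\mathfrak M(V_1,\ldots,V_m)$ with a Monge--Amp\`ere current and the Bedford--Taylor continuity argument belong to the proof of Theorem~\ref{thmMain}, not of Theorem~\ref{thmQuasi}. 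Once you strip that framing away, what is left is precisely the pair of extremal estimates in your middle paragraph, and the proof stands.
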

From here follows the old result on the asymptotic density of the variety of roots of systems of exponential sums \cite{K0,K1}.
\section{Averaged Distribution on a Complex Manifold}\label{Aver}
Let $V_1, \ldots, V_m$ be finite-dimensional spaces of holomorphic functions on an $n$-dimensional manifold $X$, and $\langle*,*\rangle_i$ be a Hermitian scalar product in the space $V_i$. The roots of the system
\begin{equation}\label{eqSys2}
  f_1=\ldots=f_m=0; \,0\ne f_i\in V_i
\end{equation}
depend solely on the projections $p_i=\pi_i(f_i)$ of the points $f_i\in V_i\setminus0$ onto the projectivizations $\mathbb P_i$ of the spaces $V_i$.
We consider the variety of roots of the system (\ref{eqSys2}) as a current $\mathfrak M(f_1,\ldots,f_m)$.
Furthermore, if necessary, we identify the system (\ref{eqSys2}) with a point $(p_1,\ldots,p_m)\in\mathbb P_1\times\ldots\times\mathbb P_m$.
Using the Hermitian metrics $\langle*,*\rangle_i$,
we define and calculate the averaging $\mathfrak M(V_1,\ldots,V_m)$ of the current $\mathfrak M(f_1,\ldots,f_m)$ over all systems (\ref{eqSys2}).
For this purpose, we employ the corresponding products of $\langle*,*\rangle_i$ and the Fubini-Study metrics in the spaces $\mathbb P_i$; see, for example, \cite{Shab}.
Let $\Omega_i$ denote the corresponding volume form in $\mathbb P_i$, normalized as $\int_{\mathbb P_i}\Omega_i=1$.
\begin{definition}\label{dfAverage} Suppose $\pi_i(f_i)=p_i$, where $\pi_i\colon V_i\setminus0\to\mathbb P_i$ is the projection mapping.
Identify the system (\ref{eqSys2}) with a point $(p_1,\ldots,p_m)\in \mathbb P_1\times\ldots\times\mathbb P_m$.
The current on $X$, defined as
$$
\mathfrak M(V_1,\ldots,V_m)=\int_{\mathbb P_1\times\ldots\times\mathbb P_m}\mathfrak M(f_1,\ldots,f_m)\: \Omega_1\wedge\ldots\wedge \Omega_m
$$
is called the averaged distribution of roots of systems of the form (\ref{eqSys2}).
\end{definition}
\begin{definition}\label{dfTheta}
Furthermore, we assume that
$$
  \forall (x\in X, i\leq m)\:\exists f\in V_i\colon f(x)\ne0
$$
Let $V^*_i$ be the space of linear functionals on $V_i$.
Define the mapping $\Theta_i\colon X\to V^*_i\setminus0$ as $\Theta_i(x)\colon f\mapsto f(x)$.
Consider the Hermitian product $\langle*,*\rangle^*_i$ in the space $V^*_i$,
conjugate to the product $\langle*,*\rangle_i$.
For $x\in X$, let $\|x\|_i=\sqrt{\langle \Theta_i(x),\Theta_i(x)\rangle^*_i}$.
\end{definition}
\begin{theorem}\label{crofton1}
It holds that
$$
\mathfrak M(V_1,\ldots,V_m)=\frac{1}{(2\pi)^m}dd^c\log\|x\|^2_1\wedge\ldots\wedge dd^c\log\|x\|^2_m
$$
\end{theorem}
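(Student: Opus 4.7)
The plan is to reduce Theorem~\ref{crofton1} to the single-factor case $m=1$ via a Bedford--Taylor factorisation of the intersection current $\mathfrak M(f_1,\ldots,f_m)$, and then to dispose of the single-factor step by a unitary averaging argument on the Fubini--Study sphere.

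First I would invoke the Poincar\'e--Lelong formula in the paper's convention, $\mathfrak M(f_i)=\frac{1}{2\pi}\,dd^c\log|f_i|^2$. For $(\Omega_1\wedge\cdots\wedge\Omega_m)$-almost every choice of $(f_1,\ldots,f_m)$, the zero locus is a complete intersection along $X_{\mathrm{reg}}$, so the intersection current factorises as the Bedford--Taylor wedge
\[
\mathfrak M(f_1,\ldots,f_m)=\mathfrak M(f_1)\wedge\cdots\wedge\mathfrak M(f_m).
\]
Since the parameter spaces $\mathbb P_i$ are mutually independent and $\Omega_1\wedge\cdots\wedge\Omega_m$ is a probability measure on the product, Fubini together with the linearity of $dd^c$ lets me push each $\mathbb P_i$-integration inside the wedge:
\[
\mathfrak M(V_1,\ldots,V_m)=\frac{1}{(2\pi)^m}\bigwedge_{i=1}^{m}dd^c\!\int_{\mathbb P_i}\log|f(x)|^2\,\Omega_i.
\]
What remains is to evaluate the scalar average $\Phi_i(x):=\int_{\mathbb P_i}\log|f(x)|^2\,\Omega_i$.

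To compute $\Phi_i$ I would choose an orthonormal basis $e_1,\ldots,e_N$ of $V_i$ with respect to $\langle*,*\rangle_i$, write $f=\sum_j c_j e_j$, and use that $\Omega_i$ is the push-forward under the Hopf fibration of the uniform probability measure on $S(V_i)\cong S^{2N-1}$. Since $\log|f(x)|^2$ is $S^1$-invariant in $f$, the $\mathbb P_i$-integral coincides with the spherical one. Setting $v(x):=(e_1(x),\ldots,e_N(x))$, we have $f(x)=\langle c,\overline{v(x)}\rangle$, and by the dual inner product of Definition~\ref{dfTheta}, $\|v(x)\|^2=\|x\|_i^2$. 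Unitary invariance of the sphere measure then lets me rotate $\overline{v(x)}/\|x\|_i$ into the first standard basis vector, reducing the integral to
\[
\Phi_i(x)=\int_{S^{2N-1}}\log\bigl(\|x\|_i^2\,|c_1|^2\bigr)\,d\sigma=\log\|x\|_i^2+C_N,
\]
where $C_N:=\int_{S^{2N-1}}\log|c_1|^2\,d\sigma$ is a finite universal constant. Since $dd^c$ annihilates $C_N$, substituting back yields the claim.

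\emph{The main obstacle} is justifying the Bedford--Taylor product $\mathfrak M(f_1)\wedge\cdots\wedge\mathfrak M(f_m)$ and its commutation with integration against $\Omega_1\wedge\cdots\wedge\Omega_m$, since each $\log|f_i|^2$ is singular along $\{f_i=0\}$ and therefore lies outside the classical Bedford--Taylor framework of locally bounded plurisubharmonic functions. I would resolve this by approximating $\log|f_i|^2$ from above by the locally bounded truncations $\log\max(|f_i|^2,\varepsilon)$ and invoking Bedford--Taylor's monotone continuity for decreasing sequences, together with the fact that for $(\Omega_1\wedge\cdots\wedge\Omega_m)$-a.e.\ $(f_1,\ldots,f_m)$ the system has codimension exactly $m$. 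An attractive alternative, bypassing Bedford--Taylor in the single-factor step entirely, is to recognise that $Z_{f}=\overline{\Theta}_i^{-1}(H_{[f]})$ with $H_{[f]}\subset\mathbb P(V_i^*)$ the hyperplane dual to $[f]\in\mathbb P(V_i)$, and that averaging hyperplanes against $\Omega_i$ produces the Fubini--Study K\"ahler form on $\mathbb P(V_i^*)$, whose pullback along $\overline{\Theta}_i$ equals $\frac{1}{2\pi}\,dd^c\log\|x\|_i^2$.
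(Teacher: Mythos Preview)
Your route is genuinely different from the paper's. The paper never invokes Bedford--Taylor theory or evaluates the spherical integral $\int_{S^{2N-1}}\log|c_1|^2\,d\sigma$; instead it observes that $\frac{1}{2\pi}dd^c\log\|x\|_i^2$ is the pullback along $\Theta_i$ of the Fubini--Study form $\omega^*_i$ on $\mathbb P^*_i$, reduces locally via Sard's lemma to the case where $X$ embeds in $\mathbb P^*_1\times\cdots\times\mathbb P^*_m$, and then proves a Crofton formula on that product (Theorem~\ref{thmcrofton}). This is packaged as a double-fibration identity $\gamma_*\delta^*(\Omega_1\wedge\cdots\wedge\Omega_m)=\omega^*_1\wedge\cdots\wedge\omega^*_m$, and because the incidence variety is a direct product of $m$ single-factor incidence varieties, the identity factors immediately into $m$ copies of the classical projective Crofton formula $\gamma_*\delta^*\Omega=\omega^*$. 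Thus the paper handles the passage from $m=1$ to general $m$ through the \emph{geometric} product structure of the double fibration, not through multiplicativity of Monge--Amp\`ere products.

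Your ``attractive alternative'' at the end is precisely the paper's single-factor step. Your primary line is correct in outline, but the reduction to $m=1$ is where the work hides: the phrase ``Fubini together with the linearity of $dd^c$'' does not by itself justify commuting the $\prod_i\Omega_i$-integration through a wedge of \emph{singular} currents. The clean fix is inductive slicing---apply the single-factor Crofton on the analytic set $Z_{f_1,\ldots,f_{m-1}}$ to integrate out $f_m$, obtaining $\mathfrak M(f_1,\ldots,f_{m-1})\wedge\frac{1}{2\pi}dd^c\log\|x\|_m^2$ with the last factor now smooth, then recurse---rather than the truncation route, where you must additionally justify exchanging the limit $\varepsilon\to 0$ with the parameter integral. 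The paper's double-fibration argument sidesteps all of this; your analytic approach buys a more explicit computation and stays closer to pluripotential theory, at the cost of that extra bookkeeping.
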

The assertion of the theorem is local.
That is, for its proof, one can replace $X$ with any small neighborhood of the point $x\in X$.
Therefore, by Sard's lemma, the theorem is reduced to the case where the mapping
$$
X\xrightarrow{\Theta_1\times\ldots\times\Theta_m}(V^*_1\setminus0)\times\ldots\times (V^*_m\setminus0)\xrightarrow{\pi^*_1\times\ldots\times\pi^*_m}\mathbb P^*_1\times\ldots\times \mathbb P^*_m,
$$
where $\mathbb P^*_i$ is the dual of $\mathbb P_i$ projective space, and $\pi^*_i\colon V^*_i\to\mathbb P^*_i$ is the projection mapping, is a closed embedding.

Let $\omega^*_i$ be a Kähler form in $\mathbb P^*_i$, such that its integral over the projective line is equal to one.
Then the form $\frac{1}{2\pi}dd^c\log(\|f\|^*_i)^2$ in $V^*_i\setminus0$ is the pullback of the Kähler form $\omega^*_i$ in $\mathbb P^*_i$ under the projection mapping.
Therefore, Theorem \ref{crofton1} can be reduced to the following statement.
\begin{theorem}\label{thmcrofton} 
Let $X$ be a closed complex manifold with a boundary in $\mathbb P^*_1\times\ldots\times\mathbb P^*_m$, $p_i\in\mathbb P_i$, and $H(p_i)$ be a projective hypersurface in $\mathbb P^*_i$ defined by the equation $p_i(*)=0$. Consider $H(p_1,\ldots,p_m)=H(p_1)\times\ldots\times H(p_m)$. Then, for any differential form $\varphi$ of degree $2n-2m$ on the manifold $\mathbb P^*_1\times\ldots\times\mathbb P^*_m$, it holds that
$$
\int_{(p_1,\ldots,p_m)\in\mathbb P_1\times\ldots\times\mathbb P_m}\left(\int_{X\cap H(p_1,\ldots,p_m)}\varphi\right)\Omega_1\wedge\ldots\wedge\Omega_m=
\frac{1}{(2\pi)^m}\int_X\varphi\wedge\omega^*_1\wedge\ldots\wedge\omega^*_m
$$
\end{theorem}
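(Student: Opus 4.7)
The plan is to reduce the statement to independent single-factor Crofton identities on each $\mathbb{P}^*_i$ via Fubini, prove that single-factor identity by Poincar\'e--Lelong and unitary invariance, and then reassemble on~$X$.

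\textbf{Step~1 (product reduction).} By definition $H(p_1,\ldots,p_m)=H(p_1)\times\cdots\times H(p_m)$ inside $Y:=\mathbb{P}^*_1\times\cdots\times\mathbb{P}^*_m$, so the integration current factors as $[H(p_1,\ldots,p_m)]=\pi_1^*[H(p_1)]\wedge\cdots\wedge\pi_m^*[H(p_m)]$, where $\pi_i\colon Y\to\mathbb{P}^*_i$ is the projection. Writing the inner integral as the evaluation of this current on $\varphi|_X$, and using that $\Omega_1\wedge\cdots\wedge\Omega_m$ is a product measure on independent factors $\mathbb P_i$, I commute each outer integration past the wedge product to rewrite the left-hand side as
\begin{equation*}
\int_X\varphi\wedge\bigwedge_{i=1}^m\mathcal{A}_i,\qquad
\mathcal{A}_i:=\int_{\mathbb{P}_i}\pi_i^*[H(p_i)]\,\Omega_i(p_i).
\end{equation*}
It therefore suffices to prove the single-factor assertion $\mathcal{A}_i=\tfrac{1}{2\pi}\omega^*_i$ as currents on $\mathbb{P}^*_i$.

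\textbf{Step~2 (single-factor averaging via unitary invariance).} Dropping the index, Poincar\'e--Lelong applied to $p$ viewed as a section of $\mathcal{O}_{\mathbb{P}^*}(1)$ with the Hermitian metric whose curvature is proportional to $\omega^*$ yields the global identity of currents
\begin{equation*}
[H(p)]=\omega^*+\tfrac{1}{2\pi}\,dd^c u_p,\qquad u_p(\xi):=\log\frac{|p(\xi)|^2}{\|\xi\|^2},
\end{equation*}
with $u_p$ an $L^1$ function on $\mathbb{P}^*$. Averaging in~$p$ against~$\Omega$ and commuting the smooth integration past $dd^c$,
\begin{equation*}
\mathcal{A}=\omega^*+\tfrac{1}{2\pi}\,dd^c\!\left(\int_{\mathbb{P}}u_p(\xi)\,\Omega(p)\right).
\end{equation*}
Under the diagonal $U(V)$-action on $\mathbb{P}(V)\times\mathbb{P}(V^*)$ (natural on $V$, dual on $V^*$) the quantities $|p(\xi)|$, $\|\xi\|$ and $\Omega$ are all invariant, so the inner averaged function of $\xi$ is $U(V)$-invariant on $\mathbb{P}^*$; by transitivity it is constant, its $dd^c$ vanishes, and we conclude $\mathcal{A}=c\,\omega^*$ for some constant~$c$.

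\textbf{Step~3 (fixing the constant and reassembly).} The value of $c$ is pinned down by pairing both sides with a projective line $L\subset\mathbb{P}^*$: on the one hand $\int_L\mathcal{A}=\int_{\mathbb{P}}|L\cap H(p)|\,\Omega(p)$, which by B\'ezout (a line and a hyperplane meet in one point) equals the expected intersection number, while on the other hand $\int_L c\,\omega^*$ is determined by the paper's normalization of $\omega^*$ as a multiple of $\tfrac{1}{2\pi}dd^c\log\|\cdot\|^2$; matching these fixes $c$ and, upon substitution into Step~1, yields the $(2\pi)^{-m}$ factor of the theorem.

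\textbf{Main obstacle.} The conceptual step is the unitary-invariance argument in Step~2; the principal technicality is a consistent bookkeeping of the $2\pi$ factors across Poincar\'e--Lelong, the convention for $d^c$, and the Fubini--Study normalizations of $\omega^*$ and $\Omega$, so that the cumulative constant lands on $(2\pi)^{-m}$. The commutation of $dd^c$ with the smooth $\Omega$-average and the passage from $Y$ to the closed embedded $X$ are routine, the latter already arranged by the Sard-lemma reduction noted before the statement.
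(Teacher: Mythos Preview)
Your argument is correct and shares the paper's overall architecture: reduce to a single factor via the product structure, then settle the single-factor identity by unitary invariance. The paper, however, phrases both steps in the language of integral geometry rather than Poincar\'e--Lelong. It introduces the incidence variety $\Gamma(\mathbb P_i)=\{(q,p):p(q)=0\}\subset\mathbb P^*_i\times\mathbb P_i$ with its two projections $\gamma,\delta$, rewrites the theorem as the push--pull identity $\gamma_*\delta^*(\Omega_1\wedge\cdots\wedge\Omega_m)=\omega^*_1\wedge\cdots\wedge\omega^*_m$, observes that this double fibration is a product of the individual $\Gamma(\mathbb P_i)$, and thereby reduces to $\gamma_*\delta^*\Omega=\omega^*$ on a single $\mathbb P^*$; the latter is then deduced from the fact that $\gamma$ and $\delta$ are equivariant for the transitive unitary action, so $\gamma_*\delta^*\Omega$ is an invariant closed $(1,1)$-form and hence a multiple of $\omega^*$. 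Your route through the Poincar\'e--Lelong decomposition $[H(p)]=\omega^*+\tfrac{1}{2\pi}dd^c u_p$ and the constancy of the averaged potential is a more analytic variant of the same invariance argument; it has the advantage of making the $dd^c$-exact correction term explicit, while the paper's double-fibration formulation makes the reduction to $m=1$ cleaner and ties the statement directly to the classical Crofton formalism cited there.
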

Theorem \ref{thmcrofton} can be identified as the Crofton formula for the product of projective spaces, as announced in \cite{K1}.
Here is a brief derivation of Theorem \ref{thmcrofton}.

As is customary in integral geometry (see, for example, \cite{Sh}), Theorem \ref{thmcrofton} can be reformulated in the language of double fibrations.
Let
$$
\Gamma(\mathbb P)=\{(q,p)\colon p\in\mathbb P,q\in\mathbb P^*, p\in H(q)\},
$$
$\gamma(\mathbb P)\colon (p,q)\mapsto q$, and $\delta(\mathbb P)\colon(p,q)\mapsto p$ be the two projection mappings from $\Gamma(\mathbb P)$ to $\mathbb P^*$ and $\Gamma(\mathbb P)$ to $\mathbb P$, respectively. Consider the double fibration
 \begin{equation}\label{doubleMany}
   \mathbb P^*_1\times\ldots\times\mathbb P^*_m\xleftarrow{\gamma}\Gamma(\mathbb P_1)\times\ldots\times\Gamma(\mathbb P_m)\xrightarrow{\delta}\mathbb P_1\times\ldots\times\mathbb P_m,
 \end{equation}
where $\gamma=\gamma(\mathbb P_1)\times\ldots\times\gamma(\mathbb P_m)$,
 $\delta=\delta(\mathbb P_1)\times\ldots\times\delta(\mathbb P_m)$. Let $\delta^*$ and $\gamma_*$ be the mappings of the inverse and direct images of differential forms, corresponding to the mappings $\delta$ and $\gamma$. Then, Theorem \ref{thmcrofton} transforms into the equality
\begin{equation}\label{eqdouble}
 \gamma_*\delta^*\:\Omega_1\wedge\ldots\wedge\Omega_m=\omega^*_1\wedge\ldots\wedge\omega^*_m.
\end{equation}
The fibration (\ref{doubleMany}) is a direct product of $m$ double fibrations
$$
\mathbb P^*_i\xleftarrow{\gamma(\mathbb P_i)}\Gamma(\mathbb P_i)\xrightarrow{\delta(\mathbb P_i)}\mathbb P_i.
$$
Therefore, (\ref{eqdouble}) reduces to the case $m=1$, i.e., to the following statement.
\begin{proposition}
Let $\Gamma=\{(p,q)\in\mathbb P^*\times\mathbb P\colon\:\langle p,q\rangle=0\}$,
$\gamma=\gamma(\mathbb P)$, and $\delta=\delta(\mathbb P)$. Then it holds that
$\gamma_*\delta^*\Omega=\omega^*$.
\end{proposition}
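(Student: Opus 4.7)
The plan is to exploit the full unitary symmetry of the double fibration and reduce the identity $\gamma_*\delta^*\Omega=\omega^*$ to the computation of a single normalization constant.

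First, the unitary group $U$ acting on the vector space $V$ with $\mathbb P=\mathbb P(V)$ acts naturally on $\mathbb P^*=\mathbb P(V^*)$, preserves the two Fubini--Study forms $\Omega$ and $\omega^*$, preserves the incidence variety $\Gamma$, and commutes with the projections $\gamma$ and $\delta$. Therefore $\gamma_*\delta^*\Omega$ is a $U$-invariant real form on $\mathbb P^*$. Since $\delta^*\Omega$ has bidegree $(n,n)$ and the fibres of $\gamma$ are complex of dimension $n-1$, the pushforward has bidegree $(1,1)$. Every $U$-invariant real $(1,1)$-form on $\mathbb P^*$ is a real multiple of the Fubini--Study K\"ahler form, so
$$
\gamma_*\delta^*\Omega=c\,\omega^*
$$
for some constant $c\in\R$.

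To fix $c$ I would integrate both sides over an arbitrary projective line $L\subset\mathbb P^*$. Since $\int_L\omega^*=1$ by the normalization of $\omega^*$, the problem reduces to showing $\int_{\gamma^{-1}(L)}\delta^*\Omega=1$. Writing $L=\mathbb P(W)$ for a two-dimensional subspace $W\subset V^*$, the set $\gamma^{-1}(L)$ consists of the pairs $(p,q)$ with $q\in L$ and $\langle p,q\rangle=0$. For $p\in\mathbb P$ the fibre of the restriction $\delta\colon\gamma^{-1}(L)\to\mathbb P$ is the projectivization of $\ker(p|_W)\subset W$, which is a single point as soon as $p|_W\not\equiv 0$. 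The exceptional locus $\{p:p|_W\equiv 0\}\subset\mathbb P$ has complex codimension two, so $\delta|_{\gamma^{-1}(L)}$ is a birational holomorphic map of degree one.

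Since $\delta^*\Omega$ is the pull-back of a smooth top-degree form on $\mathbb P$ under a map that is a biholomorphism off this thin set, the standard change-of-variables argument gives $\int_{\gamma^{-1}(L)}\delta^*\Omega=\int_\mathbb P\Omega=1$. Combining the two steps yields $c=1$, as required. The main subtlety is the verification that the exceptional set contributes nothing to the integral; this follows because it has real codimension four in $\mathbb P$, hence measure zero for the smooth form $\Omega$, and therefore also measure zero for its pull-back under $\delta$ on the relevant open dense set.
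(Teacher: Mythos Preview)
Your proof is correct and follows exactly the strategy the paper indicates --- exploit the equivariance of $\gamma$ and $\delta$ under the unitary action to force $\gamma_*\delta^*\Omega=c\,\omega^*$, then normalize --- though you actually supply the details (in particular the computation of $c=1$ via a line $L\subset\mathbb P^*$) which the paper omits by simply citing the classical Crofton formula in~[Sh]. One cosmetic slip: in your description of $\gamma^{-1}(L)$ you have interchanged the roles of $p$ and $q$ relative to the statement (in the proposition $p\in\mathbb P^*$ and $q\in\mathbb P$), but this is purely notational and the argument is unaffected.
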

The last proposition is a well-known formula of the Crofton type; see \cite{Sh}. Its proof is based on the commutativity of the action of the unitary group on $\mathbb P$, $\mathbb P^*$, and $\Gamma$ with the mappings $\gamma$ and $\delta$.
\section{Regular Subspaces}\label{reg}
Let $V$ be a finite-dimensional subspace in $\hat{\mathcal S}(K)$ with a fixed
Hermitian scalar product $\langle*,*\rangle$.
Let $B\subset V$ denote the ball of radius $1$ centered at the origin.
Below, we will define the concept of regularity for the space $V$.
In this definition, the Hermitian product $\langle*,*\rangle$ is used.
However, as it can be easily observed,
the regularity property of $V$ does not depend on the choice of $\langle*,*\rangle$.

Consider
the function $\max_{f\in B} \vert f(tz)\vert$ in $\C^n$ with parameter $t>0$.
According to Definition \ref{dfMeanAs},
the quantity $\max_{f\in B} \vert f(tz)\vert$ is equal to the norm of the linear functional $f_t\mapsto f_t(z)$ in the space $V(t)$.
\begin{definition}\label{dfReg}
Call the space $V$ regular if,
as $t\to+\infty$, the parameter-dependent function
$$
z\mapsto\frac{\log\max_{f\in B} \vert f(tz)\vert}{t}
$$
locally uniformly converges to the support function $h_K$ of the compact set $K$.
\end{definition}
\begin{example}\label{exNon}
Let $L\subset K$.
Then, if ${\rm conv}(K)\ne{\rm conv}(L)$,
any subspace $\hat{\mathcal S}(K)$ consisting of functions belonging to $\hat{\mathcal S}(L)$
is not regular.
Conversely, if ${\rm conv}(K)={\rm conv}(L)$,
any regular subspace $\hat{\mathcal S}(L)$
is also a regular subspace of $\hat{\mathcal S}(K)$.
\end{example}
\begin{proposition}\label{prLocUp}
For any arbitrarily small $\varepsilon>0$, there exists a constant $C$ such that for any $t>0$,
$$
\frac{\log\max_{f\in B} \vert f(tz)\vert}{t}\leq \frac{C}{t}+ h_K(z)+\varepsilon\vert z\vert.
$$
\end{proposition}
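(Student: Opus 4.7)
My approach would be to unwrap the bound directly from the support property \eqref{eqSupp} of the analytic functionals whose Borel-Fourier transforms constitute $V$, using finite-dimensionality of $V$ to turn the functional-dependent constants of \eqref{eqSupp} into a single constant valid uniformly on the unit ball $B$.

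First, I would fix a basis $f_1=\hat\mu_1,\ldots,f_d=\hat\mu_d$ of $V$ with each $\mu_i\in\mathcal S(K)$. Given $\varepsilon>0$, the next step is to produce an \emph{open} neighborhood $U$ of $K$ whose support function $h_U(z)=\sup_{\xi\in U}\re\xi(z)$ satisfies $h_U(z)\le h_K(z)+\varepsilon|z|$ on $\C^n$. By Minkowski additivity of support functions, $K+\varepsilon B_*$ has support function $h_K+\varepsilon|\cdot|$, where $B_*\subset{\C^n}^*$ denotes the closed unit ball in the norm dual to $|\cdot|$; any open neighborhood of $K$ contained in $K+\varepsilon B_*$ (for instance $U=K+\varepsilon\,\mathrm{Int}(B_*)$) will do.

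Next, I would apply \eqref{eqSupp} to each $\mu_i$ evaluated on the entire function $\xi\mapsto\E^{t\xi(z)}$; this supplies constants $C_i$ depending on $U$ but not on $(t,z)$, with
\begin{equation*}
|f_i(tz)|=|\mu_i(\E^{t\xi(z)})|\le C_i\sup_{\xi\in U}|\E^{t\xi(z)}|=C_i\,\E^{t h_U(z)}.
\end{equation*}
For any $f=\sum c_if_i\in B$, equivalence of norms on the finite-dimensional space $V$ furnishes a constant $A$ (depending only on $\langle*,*\rangle$) with $\sum|c_i|\le A$. Combining,
\begin{equation*}
\max_{f\in B}|f(tz)|\le A\Bigl(\sum_i C_i\Bigr)\E^{t h_U(z)}\le A\Bigl(\sum_i C_i\Bigr)\E^{t(h_K(z)+\varepsilon|z|)},
\end{equation*}
and taking logarithms and dividing by $t$ would yield the claimed inequality with $C=\log\!\bigl(A\sum_i C_i\bigr)$.

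I expect the only real obstacle to be the construction of $U$: it must be open (so that \eqref{eqSupp} actually applies to each $\mu_i$) yet close enough to $K$ that $h_U$ does not exceed $h_K+\varepsilon|z|$. This is where all the convex-geometric content of the proposition sits, and it is dispatched by the explicit Minkowski description above. Everything else is a routine unwrapping of definitions, with finite-dimensionality of $V$ absorbing the potential non-uniformity of the constants in \eqref{eqSupp} across the basis of $V$.
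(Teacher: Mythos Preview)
Your proof is correct and follows essentially the same route as the paper's: both take $U=K+\varepsilon B_*$, use that $h_U=h_K+\varepsilon|\cdot|$, apply \eqref{eqSupp} to bound $|\hat\mu(tz)|$, and then invoke finite-dimensionality of $V$ to make the constant uniform over the unit ball $B$. The only cosmetic difference is in how uniformity is extracted---you fix a basis and use equivalence of norms, whereas the paper observes that the preimage of $B$ under the Borel--Fourier transform is a compact set of analytic functionals and proves a small lemma giving a uniform constant in \eqref{eqSupp} over such compacta; your version is marginally more direct.
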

Proposition \ref{prLocUp} reduces the question of regularity to a lower bound for the function $\frac{\log\max_{f\in B} \vert f(tz)\vert}{t}$ depending on the growing parameter $t$.
\begin{corollary}\label{corF}
Assume that for any nonzero $z\in\C^n$, there exists a neighborhood $U_z$ of the point $z$ and a function $F_z\in V$ such that

\textbf{\rm (*)} as $t$ grows, the function
$\frac{\log\vert F_z(tw)\vert}{t}$
on $U_z$
uniformly converges to $h_K(w)$.

Then the space $V$ is regular.
\end{corollary}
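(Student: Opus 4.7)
The plan is to sandwich
\[
g_t(z):=\frac{\log\max_{f\in B}|f(tz)|}{t}
\]
between $h_K(z)\pm o(1)$: the upper side comes from Proposition \ref{prLocUp}, the lower side from the witnesses $F_z$ supplied by (*), and the passage through the origin from the scaling identity $g_t(\lambda z)=\lambda g_{t\lambda}(z)$.

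\emph{Upper side.} Proposition \ref{prLocUp} gives, for each $\varepsilon>0$, a constant $C$ with $g_t(z)\le C/t+h_K(z)+\varepsilon|z|$. On any compact $L\subset\{|z|\le R\}$, this yields $\sup_L(g_t-h_K)\le C/t+\varepsilon R$, hence $\limsup_{t\to\infty}\sup_L(g_t-h_K)\le 0$.

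\emph{Lower side on compacts of $\C^n\setminus\{0\}$.} Fix a compact $L\subset\C^n\setminus\{0\}$ and $\varepsilon>0$. The open cover $\{U_z\}_{z\in L}$ from (*) admits a finite subcover $U_{z_1},\ldots,U_{z_N}$. Each $F_i:=F_{z_i}$ is nonzero (otherwise $t^{-1}\log|F_i(tw)|\equiv-\infty$ could not converge to the finite $h_K(w)$), so $\tilde F_i:=F_i/\|F_i\|\in B$ and
\[
g_t(w)\;\ge\;\frac{\log|\tilde F_i(tw)|}{t}\;=\;\frac{\log|F_i(tw)|}{t}-\frac{\log\|F_i\|}{t}.
\]
The first term on the right converges uniformly on $U_{z_i}$ to $h_K(w)$ by (*), and the second is $O(1/t)$; taking the maximum over $i$ gives $\inf_L(g_t-h_K)\ge-\varepsilon$ for $t$ large. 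Combined with the upper side, $g_t\to h_K$ uniformly on every compact subset of $\C^n\setminus\{0\}$, so in particular $M(s):=\sup_{|z|=1}|g_s(z)-h_K(z)|\to 0$ as $s\to\infty$.

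\emph{Passage through $0$ and main difficulty.} The definition of $g_t$ immediately yields $g_t(\lambda z)=\lambda g_{t\lambda}(z)$, and $h_K$ is positively homogeneous, so $|g_t(\lambda z)-h_K(\lambda z)|\le\lambda M(t\lambda)$ for $|z|=1$ and $\lambda>0$. To dominate $\sup_{\lambda\in(0,\delta]}\lambda M(t\lambda)$ as $t\to\infty$, split at $\lambda=A/t$: on $\lambda\ge A/t$, $\lambda M(t\lambda)\le\delta\sup_{s\ge A}M(s)$, which is small for large $A$; on $\lambda<A/t$, $\lambda M(t\lambda)\le t^{-1}\sup_{s\in(0,A]}sM(s)$, finite by a one-line Taylor expansion of $\log\max_{f\in B}|f(sz)|$ at $s=0$ (under the mild assumption $M_0:=\max_{f\in B}|f(0)|>0$, which also forces $g_t(0)=(\log M_0)/t\to 0=h_K(0)$). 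Uniform convergence of $g_t$ to $h_K$ on compact subsets of $\C^n$ follows. The heart of the argument is the normalization $\tilde F_i=F_i/\|F_i\|$ in the lower-bound step; the delicate point is this passage through the origin, where the homogeneity identity reduces everything to the behavior of $M(s)$ for large $s$ (already handled by the covering step) and for small $s$ (controlled by the Taylor estimate).
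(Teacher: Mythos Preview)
Your argument is correct and follows the paper's approach: the upper bound comes from Proposition~\ref{prLocUp}, the lower bound from the witnesses $F_z$ via a finite cover, and homogeneity handles the rest. The paper compresses your scaling/origin analysis into the single remark that each $U_z$ may be replaced by the open cone $K_z=\{\tau w:w\in U_z\}$ (on which condition~(*) persists by the same rescaling you use) and that finitely many such cones cover $\C^n$; it does not spell out the normalization $\tilde F_i=F_i/\|F_i\|$ or the behaviour at $0$, which you treat more carefully.
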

\begin{proof}
Let $K_z$ denote the open cone in $\C^n$, consisting of points of the form $\{\tau w\colon w\in U_z\}$.
Then condition (*) is satisfied by replacing the neighborhood $U_z$ with the cone $K_z$.
Consider a finite covering of the space $\C^n$ by cones of the form $K_z$
and apply condition (*) to each of these cones.
\end{proof}
\begin{corollary}\label{corF2}
Assume that the space $V_1$ is regular, and $V_1\subset V\subset\hat{\mathcal S}(K)$.
Then the space $V$ is also regular.
\end{corollary}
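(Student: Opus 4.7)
The plan is to sandwich the function $z \mapsto \frac{\log\max_{f\in B}|f(tz)|}{t}$ between the universal upper bound supplied by Proposition \ref{prLocUp} and a lower bound inherited from the regularity of $V_1$. The remark right before Definition \ref{dfReg} notes that regularity of a subspace does not depend on the choice of Hermitian product, so I will exploit this freedom: fix any inner product $\langle *,*\rangle_1$ on $V_1$ witnessing its regularity, pick any algebraic complement of $V_1$ in $V$, and declare it orthogonal to $V_1$ to obtain an inner product $\langle *,*\rangle$ on $V$ whose restriction to $V_1$ agrees with $\langle *,*\rangle_1$. With this choice, the closed unit ball $B_1 \subset V_1$ equals $B \cap V_1$; in particular $B_1 \subset B$.

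The inclusion $B_1 \subset B$ yields the pointwise lower bound
\[
\frac{\log\max_{f\in B}|f(tz)|}{t} \;\geq\; \frac{\log\max_{f\in B_1}|f(tz)|}{t},
\]
and the right-hand side converges locally uniformly to $h_K(z)$ as $t\to+\infty$ by the assumed regularity of $V_1$. Hence, on any compact set, the $\liminf$ of the left-hand side is at least $h_K$, uniformly in $z$.

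For the matching upper bound I apply Proposition \ref{prLocUp} to $V$: for every $\varepsilon>0$ there is a constant $C_\varepsilon$ with
\[
\frac{\log\max_{f\in B}|f(tz)|}{t} \;\leq\; \frac{C_\varepsilon}{t} + h_K(z) + \varepsilon|z|.
\]
On a compact $L\subset\C^n$ with $R = \max_{z\in L}|z|$, this forces $\limsup_t \bigl(\tfrac{\log\max_{f\in B}|f(tz)|}{t}-h_K(z)\bigr) \leq \varepsilon R$ uniformly on $L$, and sending $\varepsilon\to 0$ gives the matching upper bound. Combined with the lower bound, this delivers locally uniform convergence to $h_K$, so $V$ is regular. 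There is no genuine obstacle; the only subtlety is arranging compatible inner products on $V_1$ and $V$, which is handled at the outset by invoking the invariance of regularity under change of Hermitian metric.
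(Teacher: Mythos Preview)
Your proof is correct. Both your argument and the paper's rest on the same sandwich: the upper bound from Proposition~\ref{prLocUp} applied to $V$, and a lower bound inherited from $V_1\subset V$. The paper's one-line proof routes the lower bound through Corollary~\ref{corF}, asserting that the single witness functions $F_z$ needed there can already be taken in $V_1$; you instead use the inequality $\max_{f\in B}|f(tz)|\geq\max_{f\in B_1}|f(tz)|$ directly. Your version is arguably a bit cleaner, since it avoids the (not entirely obvious) step of extracting individual functions $F_z$ from the regularity of $V_1$, and the inner-product compatibility you set up makes the ball inclusion $B_1\subset B$ immediate.
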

\begin{proof}
The functions $F_z$ from Corollary \ref{corF} can be found in the subspace $V_1$.
\end{proof}
Next in \S\ref{reg}, the proof of Proposition \ref{prLocUp} is provided.
To do this, we use the topology of uniform convergence on compacts in the space of entire functions
and the corresponding weak topology in the space of analytic functionals in ${\C^n}^*$.
\begin{lemma}\label{lmReg1}
Let $A$ be a compact set of analytic functionals with support $K$.
Then for any open neighborhood $U$ of the compact $K$, there exists a constant $C_U$ such that for all $\mu\in A$,
$$
  \forall f\colon \vert\mu(f)\vert \leq C_U\sup_U\vert f(\xi)\vert
$$
\end{lemma}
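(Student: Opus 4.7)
This lemma is a uniform version of the defining support estimate (\ref{eqSupp}), replacing the $\mu$-dependent constant $C_U(\mu)$ by one valid simultaneously for all $\mu\in A$. Since one is promoting a pointwise bound over a compact family of continuous linear functionals on a Fr\'echet space to a uniform one, the natural tool is the Banach--Steinhaus uniform boundedness principle.

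The plan is as follows. Given the neighborhood $U$ of $K$, choose an intermediate open set $V$ satisfying $K\subset V$, $\overline V$ compact, $\overline V\subset U$, and with $V$ polynomially convex in ${\C^n}^*$ (e.g., a polydisk neighborhood of $K$); choose also $W$ open with $K\subset W\subset\overline W\subset V$. For each $\mu\in A$ the inequality (\ref{eqSupp}) applied to the neighborhood $W$ reads $|\mu(f)|\le C_W(\mu)\sup_{\overline W}|f|$; since $\sup_{\overline W}|\cdot|$ is one of the defining continuous seminorms of the Fr\'echet space $\mathcal O(V)$ of holomorphic functions on $V$ (with the topology of uniform convergence on compacts), every $\mu\in A$ extends uniquely to a continuous linear functional $\tilde\mu$ on $\mathcal O(V)$. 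Once the pointwise boundedness $\sup_{\mu\in A}|\tilde\mu(g)|<\infty$ is established for each fixed $g\in\mathcal O(V)$, the Banach--Steinhaus theorem on the Fr\'echet space $\mathcal O(V)$ produces a compact $L\subset V$ and a constant $C$ such that $|\tilde\mu(g)|\le C\sup_L|g|$ for all $\mu\in A$ and all $g\in\mathcal O(V)$. Restricting to entire test functions $g=f$ and using $L\subset V\subset U$ yields the desired uniform bound $|\mu(f)|\le C\sup_U|f|$ with $C_U=C$.

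The main obstacle is the pointwise boundedness step: one must deduce boundedness of $\{\tilde\mu(g)\}_{\mu\in A}$ for arbitrary $g\in\mathcal O(V)$ from compactness of $A$, which is phrased in the weak topology corresponding to pointwise evaluation against \emph{entire} functions on ${\C^n}^*$. The cleanest route is to first apply Banach--Steinhaus directly to $A$ on the Fr\'echet space $\mathcal O({\C^n}^*)$ itself: weak compactness of $A$ forces pointwise boundedness on entire functions, hence equicontinuity, yielding a uniform estimate $|\mu(f)|\le C_0\sup_{L_0}|f|$ for some (possibly large) compact $L_0\subset{\C^n}^*$. Combining this global bound with the density of polynomials in $\mathcal O(V)$ for polydisk $V$, and with the individual support inequalities $|\mu(f)|\le C_W(\mu)\sup_W|f|$, then transfers pointwise boundedness to $\mathcal O(V)$ and simultaneously permits the shrinking of $L_0$ down to a compact subset of $U$. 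This interplay between the functional-analytic uniform bound on $\mathcal O({\C^n}^*)$ and the geometric support condition localized near $K$ is the delicate technical point.
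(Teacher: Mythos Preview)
The paper's ``proof'' of this lemma is a single sentence: ``Considering the compactness of the set $A$, it follows from (\ref{eqSupp}).'' In the only place the lemma is used (the proof of Proposition~\ref{prLocUp}), the set $A$ is the inverse Fourier--Borel image of the unit ball of a \emph{finite-dimensional} space $V\subset\hat{\mathcal S}(K)$; hence $A$ itself lies in a finite-dimensional subspace of analytic functionals. There the result is genuinely trivial: $\|\mu\|_U:=\sup\{|\mu(f)|:\sup_U|f|\le1\}$ is a finite seminorm on that finite-dimensional space, hence continuous, hence bounded on the compact $A$. That is almost certainly the content the author had in mind.

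Your argument is far more ambitious: you treat the lemma at face value, for an arbitrary weak-$*$ compact $A$, and correctly identify the real difficulty --- Banach--Steinhaus on $\mathcal O({\C^n}^*)$ only yields $|\mu(f)|\le C_0\sup_{L_0}|f|$ with an uncontrolled compact $L_0$, and one must then ``shrink'' $L_0$ into $U$. However, your proposed shrinking step is circular as written. To transfer pointwise boundedness to $\mathcal O(V)$ you approximate $g\in\mathcal O(V)$ by polynomials $p_k$ and write $\tilde\mu(g)=\lim_k\mu(p_k)$; but to pass to the limit \emph{uniformly in $\mu$} you need a uniform bound on $|\tilde\mu(g-p_k)|$, which requires exactly the uniform constant $\sup_{\mu\in A}C_W(\mu)<\infty$ that you are trying to prove. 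The global bound $C_0\sup_{L_0}|p_k|$ does not help here, since $p_k$ need not stay bounded on $L_0$ as $k\to\infty$ (the sequence $p_k$ converges only in $\mathcal O(V)$, not in $\mathcal O({\C^n}^*)$). Closing this gap in full generality needs an additional ingredient (for instance, the (DFS) structure of the germ space $\mathcal O(K)$ and the fact that on duals of such spaces weak-$*$ compact sets are equicontinuous), which you have not supplied. For the purposes of this paper, the finite-dimensional observation above suffices.
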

\begin{proof}
Considering the compactness of the set $A$, it follows from (\ref{eqSupp}).
\end{proof}
\begin{lemma}\label{lmReg2}
Let $A$ be a compact set of analytic functionals with support $K$.
Then for any arbitrarily small $\varepsilon>0$, there exists a constant $C$ such that
$$
\forall \mu\in A\colon\:\vert \hat\mu(z)\vert \leq C \E^{h_K(z)+\varepsilon \vert z\vert}
$$
\end{lemma}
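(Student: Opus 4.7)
The plan is to apply Lemma \ref{lmReg1} directly to the entire function $\xi\mapsto\E^{\xi(z)}$, with $z$ viewed as a parameter, and then to choose the neighborhood $U$ of the support $K$ so that its support function approximates $h_K$ to within $\varepsilon|z|$.

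First, for fixed $z\in\C^n$, consider the entire function $f_z(\xi)=\E^{\xi(z)}$ on ${\C^n}^*$. By definition $\hat\mu(z)=\mu(f_z)$, and trivially
$$\sup_{\xi\in U}\vert f_z(\xi)\vert=\sup_{\xi\in U}\E^{\re\xi(z)}=\E^{h_U(z)},$$
where $h_U(z)=\sup_{\xi\in U}\re\xi(z)$ is the support function of $U$. By Lemma \ref{lmReg1}, for every open neighborhood $U$ of $K$ there is a constant $C_U$ (uniform in $\mu\in A$) such that $\vert\hat\mu(z)\vert\leq C_U\E^{h_U(z)}$ for all $\mu\in A$ and all $z\in\C^n$.

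Second, given $\varepsilon>0$, choose $U$ to be the open $\varepsilon$-neighborhood of $K$ in ${\C^n}^*$ with respect to the norm dual to the one used for $\vert z\vert$. Any $\xi\in U$ then decomposes as $\xi=\eta+\zeta$ with $\eta\in K$ and $\vert\zeta\vert<\varepsilon$, which gives
$$\re\xi(z)=\re\eta(z)+\re\zeta(z)\leq h_K(z)+\vert\zeta\vert\,\vert z\vert\leq h_K(z)+\varepsilon\vert z\vert.$$
Taking the supremum over $\xi\in U$ yields $h_U(z)\leq h_K(z)+\varepsilon\vert z\vert$, and combined with the previous bound we obtain $\vert\hat\mu(z)\vert\leq C_U\E^{h_K(z)+\varepsilon\vert z\vert}$ with $C=C_U$ independent of $\mu\in A$.

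There is essentially no serious obstacle here: the content of the lemma is already packaged in Lemma \ref{lmReg1}, and the only point that requires any care is the quantitative link between the neighborhood $U$ chosen there and the admissible $\varepsilon$ appearing in the exponential bound. That link is purely geometric, reflecting the Lipschitz continuity of support functions with constant $\vert z\vert$: shrinking $U$ towards $K$ shrinks $h_U-h_K$ uniformly at rate $\varepsilon\vert z\vert$. No compactness-in-$z$ argument is needed because the bound is pointwise in $z$ with a constant depending only on $U$ (hence on $\varepsilon$).
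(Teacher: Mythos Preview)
Your proof is correct and follows essentially the same route as the paper: apply Lemma~\ref{lmReg1} to $f_z(\xi)=\E^{\xi(z)}$ with $U$ the $\varepsilon$-neighborhood $K+\varepsilon B_1$, then use $h_U(z)\le h_K(z)+\varepsilon\vert z\vert$ (the paper records the equality $h_U=h_K+\varepsilon\vert\cdot\vert$, but only the inequality you prove is needed).
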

\begin{proof}
Let $U=K+\varepsilon B_1$, where $B_1\subset{\C^n}^*$ is a ball of radius $1$ centered at $0$. Since
1) $\sup_{\xi\in U}\vert\E^{\xi(z)}\vert=\sup_{\xi\in U}\E^{\re \xi(z)}=\E^{h_U(z)}$,
2) $h_U(z)=h_K(z)+\varepsilon\vert z\vert$, and
3) by definition, $\hat\mu(z)=\mu(\E^{\xi(z)})$,
the required estimate follows from Lemma \ref{lmReg1}.
\end{proof}
\begin{lemma}\label{lmReg3}
Let $A$ be a compact set of analytic functionals with support $K$.
Then for any arbitrarily small $\varepsilon>0$, there exists a constant $C$ such that for any $t>0$,
$$
\forall \mu\in A\colon\:\frac{\log\vert \hat\mu(tz)\vert}{t}\leq \frac{C}{t}+ h_K(z)+\varepsilon\vert z\vert.
$$
\end{lemma}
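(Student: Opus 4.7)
The plan is to derive Lemma \ref{lmReg3} as an immediate consequence of Lemma \ref{lmReg2} by specializing the inequality at the point $tz$ and exploiting the positive homogeneity of the two right-hand-side terms. Specifically, I would start by applying Lemma \ref{lmReg2}: for the prescribed $\varepsilon>0$, there is a constant $C'$ (independent of $\mu\in A$) such that $\vert\hat\mu(w)\vert\leq C'\E^{h_K(w)+\varepsilon\vert w\vert}$ for every $w\in\C^n$.

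Next I would substitute $w=tz$ with $t>0$ and $z\in\C^n$. Since the support function $h_K$ is positively homogeneous of degree $1$, we have $h_K(tz)=t\,h_K(z)$, and of course $\vert tz\vert=t\vert z\vert$. Therefore
$$
\vert\hat\mu(tz)\vert\leq C'\E^{t h_K(z)+t\varepsilon\vert z\vert}.
$$
Taking logarithms and dividing by $t>0$ yields
$$
\frac{\log\vert\hat\mu(tz)\vert}{t}\leq\frac{\log C'}{t}+h_K(z)+\varepsilon\vert z\vert,
$$
uniformly in $\mu\in A$. Setting $C=\log C'$ gives the claim.

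There is essentially no obstacle: the constant $C'$ from Lemma \ref{lmReg2} is uniform over the compact family $A$, which is exactly what allows the estimate to hold for all $\mu\in A$ simultaneously, and the rescaling by $t$ is absorbed cleanly by the homogeneity of $h_K$ and of the Euclidean norm. If anything needs care, it is merely the observation that if $\hat\mu(tz)=0$ the inequality is trivially true (the left-hand side is $-\infty$), so one may assume $\hat\mu(tz)\ne0$ before taking the logarithm.
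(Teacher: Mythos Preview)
Your proof is correct and follows exactly the same approach as the paper: apply Lemma~\ref{lmReg2} at the point $tz$, take logarithms, and use the positive homogeneity of $h_K$ and $\vert\cdot\vert$. The only addition is your remark on the trivial case $\hat\mu(tz)=0$, which the paper leaves implicit.
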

\begin{proof}
By Lemma \ref{lmReg2},
$
\vert \hat\mu(tz)\vert \leq C \E^{h_K(tz)+\varepsilon \vert tz\vert}
$.
Taking logarithms and using the homogeneity of the functions $\vert z\vert$ and $h_K$, we obtain the desired statement.
\end{proof}
\emph{Proof of Proposition} \ref{prLocUp}.
Let $A$ be the set of analytic functionals in ${\C^n}^*$ such that their image under the Borel-Fourier transform belongs to the unit ball $B\subset V$.
The Borel-Fourier transform is continuous and bijective. Therefore, the set $A$ is compact. Hence, the desired statement follows from Lemma \ref{lmReg3}.
\section{Proof of Theorem \ref{thmMain}}\label{theorem1}
Let us recall that, according to Theorem \ref{crofton1}, for the flow of the averaged distribution of roots $\mathfrak M(V_1,\ldots,V_m)$ (see Definition \ref{dfAverage}), it holds that
$$
\mathfrak M(V_1,\ldots,V_m)=\frac{1}{(2\pi)^m}dd^c\log\|x\|^2_1\wedge\ldots\wedge dd^c\log\|x\|^2_m,
$$
where $\|x\|_i(z)$ is the norm of the linear functional $f\mapsto f(z)$ on the space $V_i$; see Definition \ref{dfTheta}.
Let $\|x\|_{t,i}(z)$ denote the norm of the linear functional $f_t\mapsto f_t(z)$ on the space $V_i(t)$; see Definition \ref{dfMeanAs}.
Then, according to the definition of the asymptotic density flow (see Definition \ref{dfMeanAs}),
$$
\mathfrak M^\infty(V_1,\ldots,V_m)=\lim\frac{1}{t^m}\frac{1}{(2\pi)^m}dd^c\log\|x\|_{t,1}^2\wedge\ldots\wedge dd^c\log\|x\|_{t,m}^2.
$$
The regularity property of spaces $V_i$ means that the function $\frac{1}{t}\log\|x\|_{t,i}^2(z)$ locally converges uniformly to $2\log h_{K_i}(z)$ as $t$ grows; see Definition \ref{dfReg}.
Now the desired statement follows from the continuity property of the complex Monge-Ampère operator
$$
(g_1,\ldots,g_m)\mapsto dd^cg_1\wedge\ldots\wedge dd^cg_m
$$
with respect to the topology of locally uniform convergence of continuous plurisubharmonic arguments $g_i$; see, for example, \cite{BT,K14f}.
\section{Proof of Theorem \ref{thmQuasi}}\label{Quasi}
According to Corollary \ref{corF2}, it is sufficient to prove Theorem \ref{thmQuasi} in the case when $V=\hat{\mathcal S}_0(K)$, i.e., when $V$ is the space of exponential sums.

Let $0\ne z\in\C^n$.
Corollary \ref{corF} reduces the proof of Theorem \ref{thmQuasi} to the construction of
an exponential sum $F_z\in V$ such that, as $t$ grows, the function
$\frac{\log\vert F_z(tw)\vert}{t}$
converges uniformly to $h_K(w)$ in some neighborhood of the point $z\in\C$.
The construction of the function $F_z$ is given below.
\begin{definition}\label{dfTrunc}
For $0\ne z\in\C^n$, consider a real linear functional on ${\C^n}^*$
$\varphi_z(\xi)=\re \xi(z)$.
Associate with the point $0\ne z\in\C^n$ the face $\Delta(z)$ of the polytope ${\rm conv}(K)$
consisting of points $\xi\in{\rm conv} (K)$
where the functional $\varphi_z$ on ${\rm conv} (K)$ attains its maximum.
Call $\Delta(z)$ the supporting face of the point $z$.
\end{definition}
\begin{lemma}
If the point $x\in\C$ is sufficiently close to $z$, then the supporting face $\Delta(x)$ is a face of the polytope $\Delta(z)$.
\end{lemma}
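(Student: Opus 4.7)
The plan is to use standard polytope geometry: the maximizer set of a linear functional on a convex polytope is a face, and under a small perturbation of the functional this face can only shrink to one of its own faces.

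Since the setting of Theorem \ref{thmQuasi} assumes $K$ finite, the set $P={\rm conv}(K)$ is a convex polytope with a finite vertex set $V(P)\subset K$ and finitely many faces. By the definition of $\Delta(z)$, the face $\Delta(z)$ is the convex hull of those vertices of $P$ on which $\varphi_z$ attains its maximal value $h_K(z)=\max_{\xi\in P}\varphi_z(\xi)$.

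The only quantitative input I need is a gap estimate. Set
$$\delta := h_K(z) - \max\{\varphi_z(v)\colon v\in V(P)\setminus\Delta(z)\}.$$
If $V(P)\subset\Delta(z)$ then $\Delta(z)=P$ and the conclusion is trivial, so I may assume $\delta>0$. By joint continuity of $(x,\xi)\mapsto\re\xi(x)$ and finiteness of $V(P)$, I can choose a neighborhood $U$ of $z$ on which $|\varphi_x(v)-\varphi_z(v)|<\delta/3$ for every $x\in U$ and every vertex $v\in V(P)$. For such $x$, linearity of $\varphi_x$ together with the polytope structure of $P$ implies that $\varphi_x$ attains its maximum on $P$ at a vertex, and the gap estimate forces that vertex to lie in $V(P)\cap\Delta(z)$.

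Consequently the maximizer set of $\varphi_x$ on $P$ coincides with its maximizer set on the sub-polytope $\Delta(z)$, which is a face of $\Delta(z)$. This is exactly the assertion $\Delta(x)\subset\Delta(z)$ as a face. I do not expect a real obstacle; the only analytic ingredient is continuity of the pairing $(x,\xi)\mapsto\re\xi(x)$, and the rest is elementary convex geometry, so the proof should occupy just a few lines in the paper.
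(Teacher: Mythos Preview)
Your argument is correct and is precisely the standard convex-geometry argument that the paper invokes implicitly: the paper's own proof is the single sentence ``Follows from the definition of supporting faces.'' You have simply written out that definition-chasing in full (gap $\delta$ between the vertices in $\Delta(z)$ and the remaining vertices, continuity of $(x,\xi)\mapsto\re\xi(x)$, hence the maximizer set of $\varphi_x$ stays inside $\Delta(z)$ and is therefore a face of it), so there is no substantive difference in approach.
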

\begin{proof}
Follows from the definition of supporting faces.
\end{proof}
%
%
\begin{lemma}
Let $\Delta_1=\Delta(z),\Delta_1,\ldots,\Delta_N$ be the set of all faces of the polytope $\Delta(z)$.
Choose a function
$F_z(w)=\sum_{\xi\in\Delta(z)\cap K,\:c_\xi\ne0}c_\xi\E^{\xi(w)}$ so that $\forall i\colon\sum_{\xi\in\Delta_i\cap K}c_\xi\E^{\xi(z)}\ne0$.
Then, the condition \textbf{\rm (*)} from Corollary {\rm\ref{corF}} is satisfied.
\end{lemma}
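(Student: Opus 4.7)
\emph{Proof plan.} The goal is to verify condition (*) of Corollary~\ref{corF}: that $\frac{1}{t}\log|F_z(tw)|\to h_K(w)$ uniformly on some neighborhood $U_z$ of $z$. The upper bound is immediate. Since every $\xi\in\Delta(z)\cap K\subset{\rm conv}(K)$ satisfies $\re\xi(w)\le h_K(w)$, the triangle inequality gives $|F_z(tw)|\le\bigl(\sum_\xi|c_\xi|\bigr)\E^{th_K(w)}$, hence $\frac{1}{t}\log|F_z(tw)|\le h_K(w)+O(1/t)$ uniformly on bounded sets. The work therefore lies in the matching lower bound, which is precisely what the non-vanishing hypothesis on every face of $\Delta(z)$ is designed to secure.

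Using the preceding lemma, shrink $U_z$ so that $\Delta(w)$ is always a subface of $\Delta(z)$ for $w\in U_z$, and stratify $U_z$ by the locally closed cells $L_i=\{w\in U_z:\Delta(w)=\Delta_i\}$. On $L_i$ every $\xi\in\Delta_i$ satisfies $\re\xi(w)=h_K(w)$, while every $\xi\in\Delta(z)\setminus\Delta_i$ satisfies $\re\xi(w)\le h_K(w)-\delta$ for some $\delta>0$ uniform on compact subsets of $L_i$. This yields the decomposition
$$
F_z(tw)=\E^{th_K(w)}Q_i(t,w)+R_i(t,w),\qquad Q_i(t,w)=\sum_{\xi\in\Delta_i\cap K}c_\xi\E^{it\,\im\xi(w)},
$$
where $|R_i(t,w)|\le C\E^{t(h_K(w)-\delta)}$ is exponentially subleading and $|Q_i(t,w)|\le\sum_\xi|c_\xi|$. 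The hypothesis $\sum_{\xi\in\Delta_i\cap K}c_\xi\E^{\xi(z)}\ne0$, once the common factor $\E^{h_K(z)}$ is extracted, is precisely $Q_i(1,z)\ne0$ for every face $\Delta_i$. Continuity of the finitely many functions $Q_i(1,\cdot)$ then lets us further shrink $U_z$ so that $|Q_i(1,w)|\ge c>0$ holds uniformly on $U_z$ for every $i$.

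The remaining and genuinely delicate step is to promote this $t=1$ estimate to a lower bound for all large $t$. As an almost periodic function of $t$, the sum $Q_i(t,w)$ can a priori oscillate arbitrarily close to zero, so a naive pointwise argument at a fixed $w$ would fail on the exceptional $t$'s. The resolution is to exploit the joint dependence of the phases $t\,\im\xi(w)$ on both $t$ and $w$: as $w$ varies over $U_z$, the locus of $(t,w)$ where $|Q_i|$ is small becomes thin enough to be absorbed once combined with the exponentially small $R_i$, and the relevant convergence is to be understood in the (continuous, psh) formulation of Definition~\ref{dfReg}, where the effective object is the continuous envelope $\log\max_{f\in B}|f(tw)|$, for which $|F_z(tw)|/\|F_z\|$ provides a lower bound. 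I expect this propagation from $t=1$ to all large $t$ to be the main obstacle in the proof; granting it, the estimates above combine to yield $\frac{1}{t}\log|F_z(tw)|\ge h_K(w)-\varepsilon$ for $t$ sufficiently large, which together with the upper bound completes the verification of condition (*).
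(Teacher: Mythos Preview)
Your decomposition by the supporting face $\Delta(w)$ is exactly the paper's approach; its entire proof is the one-line assertion $|F_z(tx)|=A(t)\E^{h_K(tx)}+B(t)$ with $A,B$ bounded, followed by the claimed convergence without further justification.

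The obstacle you flag is real, and the paper does not overcome it either. Condition~(*) demands \emph{uniform} convergence of $\frac1t\log|F_z(tw)|$ on $U_z$, but a single exponential sum $F_z$ will generically vanish somewhere on the cone over $U_z$, sending $\frac1t\log|F_z(tw)|$ to $-\infty$ there. Concretely: take $n=1$, $K=\{0,1\}$, $z=i$ (so $\Delta(z)=[0,1]$); the choice $F_z(w)=1+\E^{w}$ satisfies every face condition of the lemma, yet $F_z(ti)=1+\E^{it}$ vanishes at $t=(2k+1)\pi$. Your $|Q_i(t,w)|$ is the paper's $A(t)$, and neither the non-vanishing hypothesis at $t=1$ nor continuity in $w$ keeps it away from zero for all large $t$; your suggested rescue via ``joint dependence on $t$ and $w$'' cannot salvage the pointwise uniform statement~(*) as written.

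The clean repair bypasses Corollary~\ref{corF} and the single test function $F_z$ altogether. Regularity (Definition~\ref{dfReg}) concerns $\max_{f\in B}|f(tw)|$, and for $V=\hat{\mathcal S}_0(K)$ the square of this norm is comparable, with constants depending only on the chosen inner product, to $\sum_{\xi\in K}|\E^{\xi(tw)}|^2=\sum_{\xi\in K}\E^{2t\re\xi(w)}$. Since every vertex of ${\rm conv}(K)$ lies in $K$, this sum is trapped between $\E^{2th_K(w)}$ and $|K|\,\E^{2th_K(w)}$, whence $\frac1t\log\max_{f\in B}|f(tw)|=h_K(w)+O(1/t)$ uniformly on bounded sets---which is precisely the regularity of $\hat{\mathcal S}_0(K)$, and then Corollary~\ref{corF2} handles the general $V$.
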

\begin{proof}
Let $\Delta_i$ be the supporting face that is close to $z$ of the point $x\in\C^n$.
Then, by construction, $\vert F_z(tx)\vert= A(t) \E^{h_K(t x)}+B(t)$ for all $t>0$, where $A(t),B(t)$ are bounded for all $t>0$.
Hence, it follows that
$\frac{\log\vert F_z(tx)\vert}{t}$
locally
uniformly converges to $h_K(x)$,
i.e., the condition \textbf{\rm (*)} is satisfied.
\end{proof}
%
%
%
%
%
\begin{thebibliography}{References}
\bibitem[1]{Lel}
Pierre Lelong, Lawrence Gruman.
Entire Functions of Several Complex Variables,
2011, Springer, Berlin Heidelberg
\bibitem[2]{H1}
H\"{o}rmander, L.
An introduction to complex analysis in several variablies, 1966,
D. Van Nostrand Company, Princeton, New Jersey
\bibitem[3]{H2}
H\"{o}rmander, L.
Linear partial differential equations,
1963, Springer-Verlag, Berlin
\bibitem[4]{K0}
B. Kazarnovskii. On zeros of exponential sums.
Doklady Mathematics, 1981,
257 (4), 804--808
\bibitem[5]{Pass}
M. Passare, H. Rullg{\aa}rd.
Amoebas, Monge-Ampere measures, and triangulations of the Newton polytope.
Duke Math. Journal, 2004, 121(3), 481--507
\bibitem[6]{Alesk}
Semyon Alesker.
Hard Lefschetz theorem for valuations,
complex integral geometry, and unitatily invariant valuations.
J. differential geometry, 2003, 63, 63--95
\bibitem[7]{K14f}
B. Kazarnovskii.
On the action of complex Vonge-Ampere operator on piecewise linear functions.
Funct Anal. and Appl., 2014, 48 (1), 19--29
\bibitem[8]{K1}
B. Kazarnovskii.
Newton polyhedra and roots of systens of exponential sums,
1984, 18 (4), 40--49
\bibitem[9]{Shab}
Shabat B. V.
Introduction to complex analysis, vol. 2,
1976
M.: Nauka
\bibitem[9]{Sh}
T.  Shifrin.
The kinematic formula in complex integral geometry,
Trans. Amer. Math. Soc., 1981, 9 (2), 255--293
\bibitem[10]{BT}
Bedford E., Taylor B.A. The Dirichlet problem for a complex Monge-Ampere equations,
Invent. math., 1976, 37 (2), 1--44
\end {thebibliography}
\end {document}